\newtheorem{theorem}{Theorem}[section]
\newtheorem{lemma}{Lemma}[section]
\newtheorem{proposition}{Proposition}[section]
\newtheorem{remark}{Remark}
\theoremstyle{definition}
\newtheorem{definition}{Definition}[section]
\providecommand{\keywords}[1]
{
  \small	
  \textbf{\textit{Keywords---}} #1
}
\newcommand{\one}{\mathbbm{1}}
\newcommand{\leqst}{\leq_{\mathrm{st}}}
\newcommand{\prob}{\mathbb{P}}
\newcommand{\reals}{\mathbb{R}}
\newcommand{\given}{\,|\,}
\newcommand{\diff}{\text{d}}
\DeclareMathOperator*{\argmax}{arg\,max}
\title{Revisiting the Langlie procedure}
\author{Dennis Christensen$^{1,2}$ \\ \texttt{dchristensen@nr.no}}
\date{$^1$Norwegian Computing Center (NR) \\
    $^2$Norwegian Defence Research Establishment (FFI)}
\begin{document}
    \maketitle
    
\begin{abstract}
  Introduced in 1962, the Langlie procedure is one of the most popular approaches to sensitivity testing. It aims to estimate an unknown sensitivity distribution based on the outcomes of binary trials. Officially recognized by the U.S.~Department of Defense, the procedure is widely used both in civil and military industry. It first provides an experimental design for how the binary trials should be conducted, and then estimates the sensitivity distribution via maximum likelihood under a simple parametric model like logistic or probit regression. Despite its popularity and longevity, little is known about the statistical properties of the Langlie procedure, but it is well-established that the sequence of inputs tend to narrow in on the median of the sensitivity distribution. For this reason, the U.S.~Department of Defense's protocol dictates that the procedure is only appropriate for estimating the median of the distribution, and no other quantiles. This begs the question of whether the parametric model assumption can be disposed of altogether, potentially making the Langlie procedure entirely nonparametric, much like the Robbins--Monro procedure. In this paper we answer this question in the negative by proving that when the Langlie procedure is employed, the sequence of inputs converges with probability zero.
\end{abstract}

\keywords{Sensitivity testing, adaptive designs, consistency}

\section{Introduction}
Sensitivity testing refers to experimentally determining the susceptibility of a physical system to break, fracture or explode under various levels of stimulus. In research of energetic materials, such experiments are employed to determine the sensitivity of explosives to stimuli like friction, shock and impact \citepalias{stanag4487, stanag4488, stanag4489}. Similarly, sensitivity testing is used to estimate the probability of ignition of one-shot devices \citep{mil-std-331D}. The researcher repeatedly affects samples of the material in question with various levels of stimuli and observes which lead to explosions and which do not. Accurate sensitivity measurements are particularly important when studying materials whose hazard properties are not well documented in the literature, such as explosive remnants of war and dumped ammunition \citep{novik2022analysis, novik2024increased, christensen2024estimating, cumming2024munitions}.

Mathematically, a sensitivity test comprises $N$ binary trials, one for each stimulus, $x_1, \dots, x_N$. Given $x_n$, the binary output $y_n$ denotes success ($y_n=+1$) or failure ($y_n = -1$). Although it is more common in the sensitivity testing literature to use $y_n = 0, 1$ rather than $y_n =\pm 1$, we shall use the latter convention as it simplifies the algebra in the present paper. The pairs $(x_n, y_n)$ are thought of as realisations of random variables $(X_n, Y_n)$, for $n=1, \dots, N$. We will also write $(X, Y)$ for a generic input and outcome of a binary trial. From the observed data, we wish to estimate the underlying sensitivity distribution $F:\reals\to[0,1]$, with the defining property that $\prob(Y = +1\given X = x) = F(x)$. Also, $\prob(Y=-1\given X=x) = 1 - F(x)$. Since sensitivity testing largely is about estimating the quantiles of $F$, we introduce the notation $\xi_{100q} = F^{-1}(q)$, for $0<q<1$. In this notation, $\xi_{50}$ is the median of $F$, i.e.,~the input level yielding a $50\%$ probability of success.

A key component of sensitivity testing is the question of which experimental design to employ. That is, how to choose the inputs $X_1, \dots, X_N$. Unlike other fields like bioassay where multiple trials can be performed at once with binomial outcomes, sensitivity testing typically only allows one binary trial at a time. This motivates the employment of an adaptive design of the form
\begin{equation}\label{eq:adaptive}
    X_{n+1} = h(\left\{(X_i, Y_i)\right\}_{i=1}^{n}),
\end{equation}
for $n=1, \dots, N-1$. That is, next input depends on the data acquired up to that point. Which function $h$ should be chosen largely depends on the prior information available to the researcher and the goal of the sensitivity testing experiment. For instance, when relating the sensitivity of energetic materials to quantum chemical phenomena (either via regression or machine learning models), the median $\xi_{50}$ is of particular interest \citep{jensen2020models, michalchuk2021predicting, lansford2022building}, but in practical applications concerning safety or functionality, extreme quantiles like $\xi_{01}$ or $\xi_{99}$ are more important.

The purpose of this paper is to study one of the most popular experimental designs for sensitivity testing, namely the Langlie procedure \citep{langlie1962reliability}. This is officially recognized by the U.S.~Department of Defense for estimating the initiation probability of one-shot devices \citep{mil-std-331D}, and is thus widely used for sensitivity testing worldwide \citepalias{burke2017binary, baker2021overview}. When employing the Langlie procedure, the researcher first has to specify two parameters $a < b$, which serve as lower and upper bounds for all the trials conducted, respectively. The first trial is conducted at the input $X_1 = (a + b) / 2$, and the succeeding inputs are decided by the following inductive rule: Assume that we have conducted $n$ trials, so our observations are $(X_1, Y_1), \dots, (X_n, Y_n)$. We say there is \emph{balance at index $i$} if there are equally many successes and failures in the set $\{Y_{i+1}, \dots, Y_n\}$. That is,
\begin{equation}\label{eq:balance}
    \#\{j\in\{i+1, \dots, n\} : Y_j = -1\} = \#\{j\in\{i+1, \dots, n\} : Y_j = +1\}.
\end{equation}
Now, after $n$ trials, define $\tau_n$ to be the maximal index at which there is balance, or $0$ if there is nowhere balance. The next input $X_{n+1}$ then takes the value
\begin{equation}\label{eq:next-langlie}
    X_{n+1} = \begin{cases} (X_{\tau_n} + X_n) / 2 & \text{if}\ \tau_n > 0 \\
        (a + X_n) / 2 & \text{if}\ \tau_n = 0\ \text{and}\ Y_n = +1 \\
        (X_n + b) / 2 & \text{if}\ \tau_n = 0\ \text{and}\ Y_n = -1.
                \end{cases}
\end{equation}
In other words, if there is balance at some index $i$, then we average with the input of the maximal such index. If there is nowhere balance, then we either average with $a$ or $b$, depending on whether the last trial resulted in a success or failure.

\begin{remark}\label{rmk:sum-criterion}
    Since we decided to formulate the binary outcome $Y_n$ to be in the set $\{-1, +1\}$ rather than $\{0, 1\}$, we can reformulate criterion~\eqref{eq:balance} as $\sum_{j=i+1}^n Y_j = 0.$ Simpler still, if we define the cumulative sum process $S_n = \sum_{i=1}^n Y_i$, then criterion~\eqref{eq:balance} reads $S_i = S_n$.
\end{remark}

In the Langlie procedure, the above experimental design is combined with a parametric model, most commonly probit or logistic regression, and the sensitivity distribution is estimated using maximum likelihood. For instance, for probit regression, we would have $F(x) = \Phi(\alpha + \beta x)$, where $\alpha, \beta$ are the model parameters (to be estimated from data) and $\Phi(a) = (2\pi)^{-1/2}\int_{-\infty}^a \exp(-t^2/2)\,\diff t$ is the standard normal cumulative distribution function. This yields the log-likelihood function 
	$$\ell_n(\alpha, \beta) = \sum_{i=1}^n\left\{y_i\log\Phi(\alpha + \beta x_i) + (1 - y_i)\log[1 - \Phi(\alpha + \beta x_i)]\right\}.$$
Letting $\widehat\alpha, \widehat\beta = \argmax_{\alpha, \beta}\ell_n(\alpha, \beta)$ denote the maximum likelihood estimators, the corresponding estimator for $\xi_{50}$ is $\widehat\xi_{50} = -\widehat\alpha / \widehat\beta$. 

It has been observed by many users of the Langlie procedure that the inputs $X_n$ tend to concentrate around the median $\xi_{50}$. This is a consequence of the averaging $(X_n + X_{\tau_n}) / 2$ in the experimental design~\eqref{eq:next-langlie}. For this reason, the U.S.~Department of Defense's protocol states that the procedure should be used for estimating the median $\xi_{50}$ only, and no other quantiles of the sensitivity distribution \citep{mil-std-331D}. However, for the sole purpose of estimating $\xi_{50}$, the employment of a parametric model is unnecessarily restrictive and will lead to biased estimates if the model assumption is incorrect. Other natural alternatives to Langlie, like the Robbins--Monro procedure \citep{robbins1951stochastic}, do not suffer this restriction. Indeed, in the context of sensitivity testing, the Robbins--Monro procedure sets $X_{n+1} = X_n - a_nY_n/2$, where the non-negative sequence $(a_n)$ satisfies $\sum_{n=1}^\infty a_n^2 < \infty$ and $\sum_{n=1}^\infty a_n = \infty$. Here, the sequence $(a_n)$ is defined so as to coincide with the notation of \citet{cochran1965robbins}. \citet{robbins1951stochastic} first proved that $X_n\to\xi_{50}$ in probability, and later \citet{blum1958note} proved that $X_n\to\xi_{50}$ almost surely. That is, $\prob(X_n \to \xi_{50}\ \text{as}\ n\to\infty) = 1$. This yields a natural, model-independent estimator for $\xi_{50}$, namely $\widehat\xi_{50} = X_{N+1}$, the next hypothetical input value. Comparing the Langlie and Robbins--Monro procedures in this way begs the question of whether also the sequence $(X_n)$ converges almost surely in the Langlie procedure, so that the parametric model assumption can be disposed of altogether.

In this paper, we prove that in the Langlie procedure, $X_n$ converges with probability zero. That is, unlike the Robbins--Monro procedure, $X_{N+1}$ cannot be used as a consistent estimator for $\xi_{50}$ (or indeed any quantile $\xi_{100q})$ in the Langlie procedure.

\section{Elementary properties of the Langlie procedure}\label{sec:properties}
Before moving on the proof of non-convergence, we will briefly explore some of the elementary properties of the Langlie procedure. We begin with an exploration of the process $(X_n)$ of inputs. From \eqref{eq:next-langlie}, we see that $(X_n)$ is not a Markov chain, which makes the Langlie procedure significantly more complicated than the Robbins--Monro procedure, and indeed other popular experimental designs like the Bruceton procedure \citep{dixon1948method, christensen2024theory}. Indeed, at each index $n$, there is a positive probability that $\tau_n = 1$, meaning that the entire sequence $\{(X_1, Y_1), \dots, (X_n, Y_n)\}$ must be kept in memory in order to determine the next input $X_{n+1}$. In Figure~\ref{fig:langlieX}, we see a sample path from the process $(X_n)$, for $n=1, \dots, N$, first for $N=50$ (left) and then for $N=10^3$ (right). The indices $n$ at which $\tau_n = 0$, i.e.,~where there is nowhere balance, are marked in red. For the majority of indices, $\tau_n > 0$ and so $X_{n+1} = (X_{\tau_n} + X_n)/2$, which makes the inputs $X_n$ concentrate around the median $\xi_{50}$. However, at some of the points where $\tau_n = 0$, $X_{n+1}$ is obtained by averaging $X_n$ with $a$ or $b$, and so there is a significant jump in the process, before it quickly concentrates around $\xi_{50}$ again. There are also other indices where $\tau_n > 0$ but where there is also a minor jump in the value of $X_{n+1}$.

\begin{figure}
    \begin{center}
        \includegraphics[scale=.65]{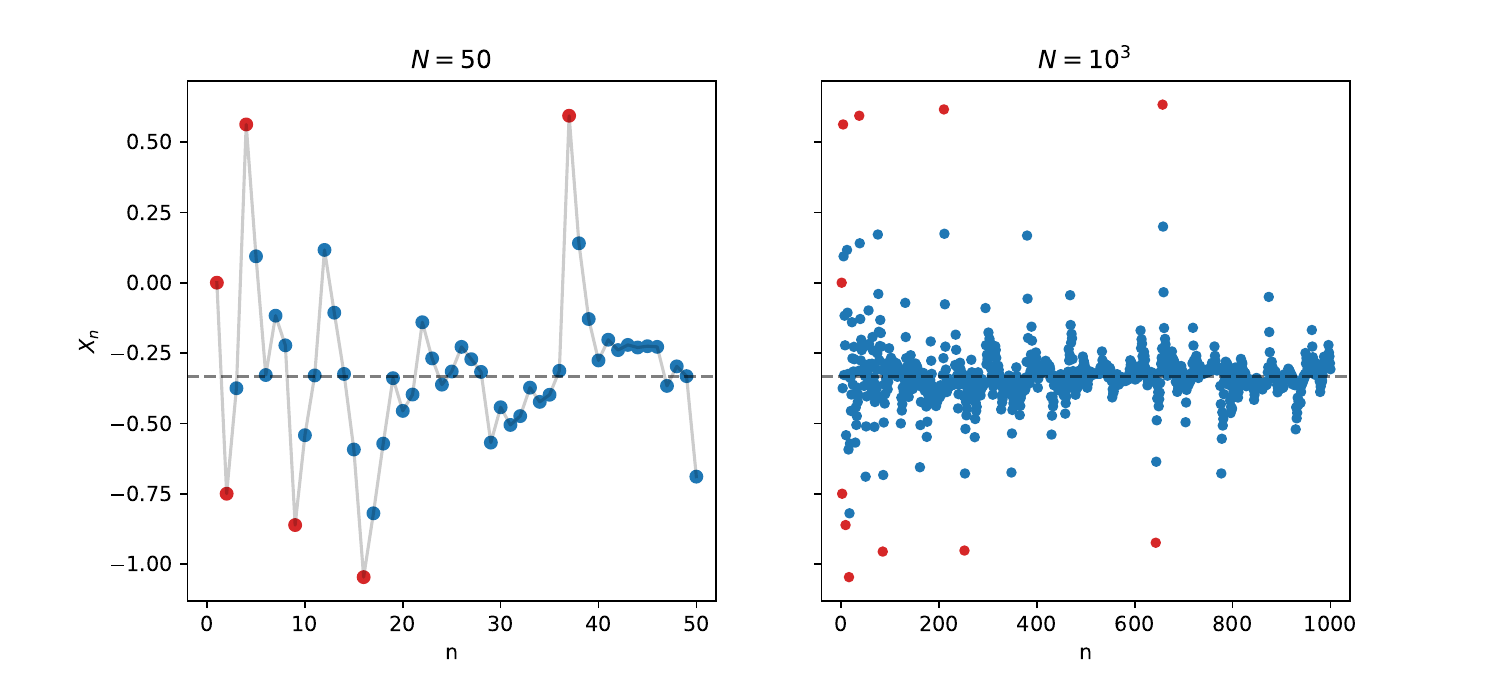}
    \end{center}
    \caption{The process $(X_n)$, for $n=1, \dots, N$, where $N=50$ (left) and $N=10^3$ (right). The process was generated using $(a, b) = (-1.5, 1.5)$, and the true sensitivity distribution was $F(x) = \Phi(\alpha + \beta x)$, with $(\alpha, \beta) = (3.333, 9.999)$. The true median $\xi_{50} = -\alpha/\beta = -0.333$ is shown by the dashed line, and the points $(n, X_n)$ at which $\tau_n = 0$ are colored in red. \label{fig:langlieX}}
\end{figure}

Further insight into the mechanisms of the Langlie procedure are obtained by studying the process $(Y_n)$. More precisely, we will focus on the cumulative sums $S_n = \sum_{i=1}^n Y_i$, defined on the integers. This is a non-symmetric random walk with dependent increments, which makes it more complicated to study than a classical random walk in which the increments are assumed to be independent. In Figure~\ref{fig:langlieY}, we see the sample paths for $(S_n)$, for $n=1, \dots, N$, for $N=50$ and $N=10^3$, corresponding to those in Figure~\ref{fig:langlieX}. A crucial observation can be made from this figure, namely that the indices $n$ for which $\tau_n = 0$ are precisely those where the process $(S_n)$ visits a new value not previously attained. We prove this rigorously.

\begin{figure}
    \begin{center}
        \includegraphics[scale=.65]{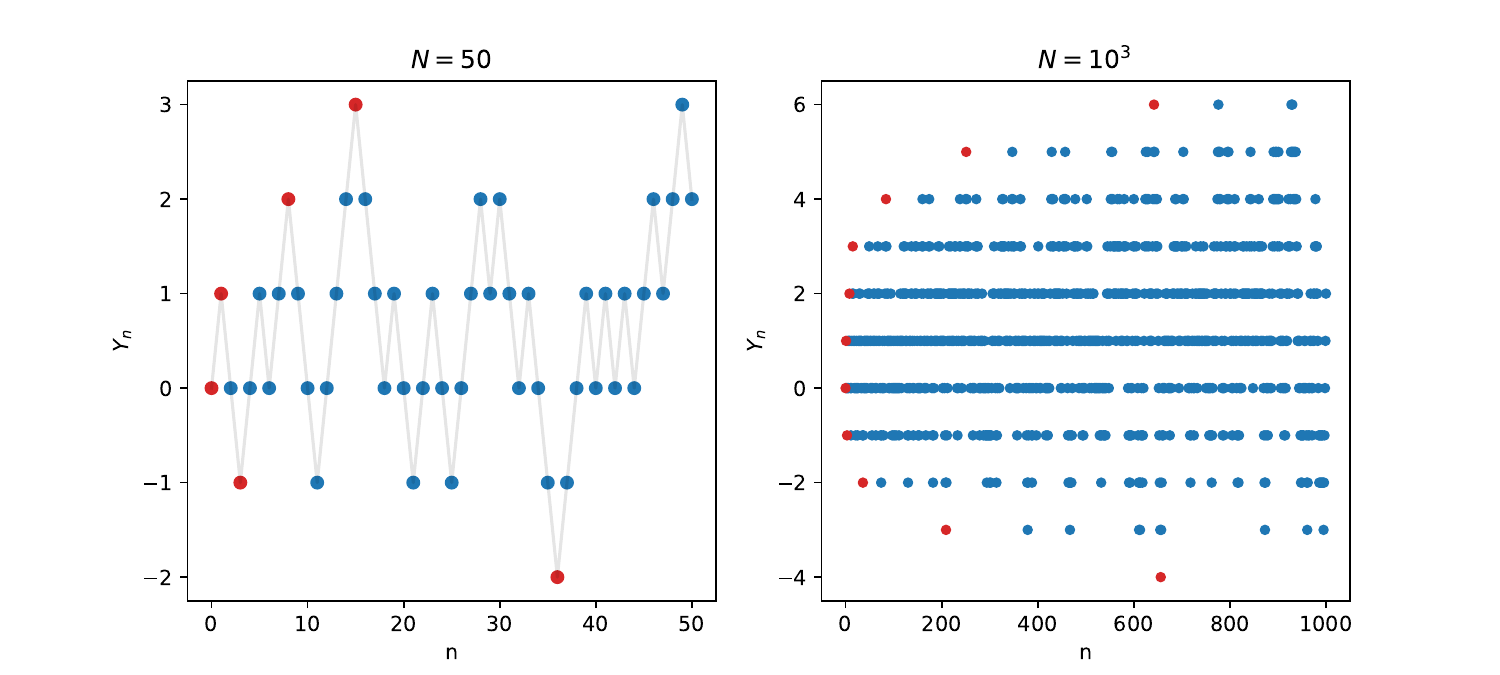}
    \end{center}
    \caption{The process $(S_n)$, for $n=1, \dots, N$, where $N=50$ (left) and $N=10^3$ (right), corresponding to the $(X_n)$ paths shown in Figure~\ref{fig:langlieX}. The points $(n, S_n)$ at which $\tau_n = 0$ are colored in red. \label{fig:langlieY}}
\end{figure}

\begin{proposition}\label{prop:bounded-Y}
    Let $(X_n, Y_n)_{n\geq 1}$ follow the Langlie procedure with parameters $a < b$, and let $S_n = \sum_{i=1}^n Y_i$. Then for all $n\geq 2$, $\tau_n = 0$ if and only if $S_n \notin \{S_1, \dots, S_{n-1}\}$.
\end{proposition}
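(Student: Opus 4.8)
The plan is to deduce the statement directly from the reformulation of the balance criterion recorded in Remark~\ref{rmk:sum-criterion}. By definition, $\tau_n$ is the largest index $i\in\{1,\dots,n-1\}$ at which there is balance, with $\tau_n=0$ when no such index exists. By Remark~\ref{rmk:sum-criterion}, there is balance at index $i$ precisely when $S_i=S_n$. Hence $\tau_n=\max\{i\in\{1,\dots,n-1\}:S_i=S_n\}$ whenever the set on the right is nonempty, and $\tau_n=0$ otherwise. Since every nonempty subset of $\{1,\dots,n-1\}$ has maximum at least $1$, it follows that $\tau_n=0$ if and only if $\{i\in\{1,\dots,n-1\}:S_i=S_n\}=\varnothing$.

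It then remains only to note that $\{i\in\{1,\dots,n-1\}:S_i=S_n\}=\varnothing$ says exactly that $S_i\neq S_n$ for every $i\in\{1,\dots,n-1\}$, that is, $S_n\notin\{S_1,\dots,S_{n-1}\}$; this yields both implications simultaneously. For clarity I would still spell out the two directions: if $\tau_n=0$, then there is balance at no index $i\in\{1,\dots,n-1\}$, so $S_i\neq S_n$ for all such $i$ and hence $S_n\notin\{S_1,\dots,S_{n-1}\}$; conversely, if $S_n\notin\{S_1,\dots,S_{n-1}\}$, then $S_i\neq S_n$ for all $i\in\{1,\dots,n-1\}$, so by Remark~\ref{rmk:sum-criterion} there is balance nowhere in that range, whence $\tau_n=0$ by definition.

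There is no real obstacle here: the argument is a definitional unwinding, and the only point requiring a little care is the indexing convention. One should check that the index $i=0$ — ``balance at index $0$'', which by Remark~\ref{rmk:sum-criterion} amounts to $S_n=S_0=0$ — plays no role: it lies outside the range $\{1,\dots,n-1\}$ over which $\tau_n$ is defined, and even if one admitted it as a candidate index, the resulting maximum would equal $0$ exactly when no positive index $i$ has $S_i=S_n$, so the claimed equivalence is unchanged. This is also why the set in the statement is $\{S_1,\dots,S_{n-1}\}$ and not $\{S_0,S_1,\dots,S_{n-1}\}$, a distinction that is only visible when $S_n=0$. Finally, the statement is vacuously true for $n=1$ as well, so restricting to $n\geq2$ costs nothing. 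I would also flag, for use later, that the proposition identifies the events $\{\tau_n=0\}$ with the times at which the walk $(S_n)$ attains a value not previously seen, which is the bridge that lets the behaviour of $(X_n)$ be analysed through the random walk $(S_n)$.
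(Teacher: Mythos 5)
Your proof is correct and follows essentially the same route as the paper's: both arguments reduce the claim to the reformulation of the balance criterion as $S_i=S_n$ from Remark~\ref{rmk:sum-criterion} and then unwind the definition of $\tau_n$ as a maximal such index. Your additional remarks on the $i=0$ convention are accurate but not needed for the equivalence, as you yourself observe.
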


\begin{proof}
    If $S_n \in \{S_1, \dots, S_{n-1}\}$, then $S_n = S_i $ for some $i\in\{1, \dots, n-1\}$, and so $\tau_n = i$ by Remark~\ref{rmk:sum-criterion}. As all implications also hold in reverse, the converse is also true.
\end{proof}

As will be made clear in the next section, studying the prevalence of indices $n$ for which $\tau_n = 0$ is the key for establishing non-convergence of the process $(X_n)$.

\section{Proof of non-convergence}\label{sec:proof}
In this section, we prove that for all $q\in(0, 1)$ such that $a < F^{-1}(q) < b$, $\prob(X_n\to\xi_{100q}\ \text{as}\ n\to\infty) = 0$, thus demonstrating that the Langlie procedure may not be turned into a nonparametric estimation procedure like Robbins--Monro. The key to the proof will be to compare the process $(S_n)$ with a suitable random walk with independent increments. We will compare these processes using the stochastic order, defined as follows.

\begin{definition}
    Let $P$ and $Q$ be two real-valued random variables. We say that $P$ is \emph{smaller than $Q$ in the usual stochastic order}, denoted by $P\leqst Q$, if $\prob(P \leq x) \geq \prob(Q \leq x)$ for all $x\in\reals$. We say that $P$ and $Q$ are \emph{equal in law}, denoted by $P =_{\mathrm{st}} Q$, if $P\leqst Q$ and $Q\leqst P$. That is, $P=_{\mathrm{st}}Q$ if and only if $\prob(P \leq x) = \prob(Q\leq x)$ for all $x\in\reals$.
\end{definition}

Using the usual stochastic order, the next lemma allows us to compare the sums $S_n$ associated to the Langlie procedure with a simpler random walk with independent increments.

\begin{lemma}\label{lemma:stochastic-order}
    Let $Y_1, Y_2, \dots$ be (dependent) random variables taking values in $\{-1, +1\}$, with $\prob(Y_1 = +1) = f_1$ and $\prob(Y_{n+1} = +1\given Y_1, \dots, Y_n) = f_{n+1}(Y_1, \dots, Y_n)$ for $n\geq 1$. Here, $0 < f_1 < 1$ is a constant and $f_{n+1}$ are measurable functions for all $n$. Also, write $S_n = \sum_{i=1}^n Y_i$ and let $A_n = |S_n|$. Let $$\mathcal{S} = \{f_1\}\cup\bigcup_{n=1}^\infty\bigcup_{y_1, \dots, y_n=\pm 1}\{f_{n+1}(y_1, \dots, y_n)\},$$
    and write $(1 - \mathcal{S})$ for the set $\{1 - s : s \in \mathcal{S}\}$. Assume that the set $\mathcal{S}\cup(1 - \mathcal{S})$ is uniformly bounded below by a constant $p$, with $0 < p < 1/2$. Next, let $Z_1, Z_2, \dots$ be independent random variables taking values in $\{-1, +1\}$, with $\prob(Z_n = +1) = p$ for $n \geq 1$. Define the variables $B_1, B_2, \dots$ by $B_1 = 1$ and $B_{n+1} = |B_n + Z_{n+1}|$ for $n\geq 1$. That is, $(B_n)$ is a non-negative random walk with independent increments $Z_n$ and a strong reflective barrier at the origin. Finally, let $P_n = \max\{A_1, \dots, A_n\}$ and $Q_n = \max\{B_1, \dots, B_n\}$. Then $Q_n \leq_{\mathrm{st}} P_n$ for all $n\geq 1$.
\end{lemma}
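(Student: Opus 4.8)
The plan is to exhibit a coupling on a common probability space under which the Langlie maxima $P_n$ dominate the reflected-walk maxima $Q_n$ pathwise; the stochastic order then follows immediately.

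First I would pin down the one-step dynamics of $A_n = |S_n|$. Writing $\mathcal{F}_n = \sigma(Y_1, \dots, Y_n)$, a short case analysis on the sign of $S_n$ shows that $A_{n+1} - A_n \in \{-1, +1\}$ when $S_n \neq 0$, that $A_{n+1} = 1$ whenever $S_n = 0$, and crucially that
\begin{equation*}
    \prob(A_{n+1} = A_n + 1 \given \mathcal{F}_n) \geq p \quad \text{almost surely},
\end{equation*}
because this conditional probability equals $f_{n+1}(Y_1,\dots,Y_n)$ when $S_n > 0$, equals $1 - f_{n+1}(Y_1,\dots,Y_n)$ when $S_n < 0$ (both at least $p$ by the hypothesis that $\mathcal{S} \cup (1 - \mathcal{S})$ is bounded below by $p$), and equals $1$ when $S_n = 0$. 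Thus $(A_n)$ behaves like a non-negative nearest-neighbour walk reflected at the origin whose conditional probability of stepping up always dominates that of $(B_n)$, which is exactly $p$.

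Next I would build the coupling. Let $W_1, W_2, \dots$ be i.i.d.\ uniform on $[0,1]$, and generate $Y_1, Y_2, \dots$ from them in the usual way so that $\prob(Y_{n+1} = +1 \given \mathcal{F}_n) = f_{n+1}(Y_1, \dots, Y_n)$; then, conditionally on $\mathcal{F}_n$, the event $\{A_{n+1} = A_n + 1\}$ is of the form $\{W_{n+1} \in E_{n+1}\}$ for an $\mathcal{F}_n$-measurable subinterval $E_{n+1} \subseteq [0,1]$ of length at least $p$. Define $\widetilde Z_{n+1} = +1$ iff $W_{n+1}$ lies in a fixed length-$p$ subinterval $C_{n+1} \subseteq E_{n+1}$ when $\widetilde B_n \geq 1$ (and, say, iff $W_{n+1} \leq p$ when $\widetilde B_n = 0$), where $\widetilde B_1 = 1$ and $\widetilde B_{n+1} = |\widetilde B_n + \widetilde Z_{n+1}|$. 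Since $C_{n+1}$ has Lebesgue measure $p$ and $W_{n+1}$ is independent of the past, the conditional probability $\prob(\widetilde Z_{n+1} = +1 \given \mathcal{F}_n, \widetilde B_n)$ is identically $p$, so $(\widetilde Z_n)$ is i.i.d.\ on $\{-1, +1\}$ with parameter $p$, whence $(\widetilde B_n)$ has the same law as $(B_n)$; and by construction, whenever $\widetilde B_n \geq 1$ and $\widetilde Z_{n+1} = +1$, we also have $A_{n+1} = A_n + 1$.

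The core step is then to prove by induction on $n$ that $A_n \geq \widetilde B_n$ and $A_n \equiv \widetilde B_n \pmod 2$ almost surely. The base case holds since $A_1 = 1 = \widetilde B_1$. For the inductive step, if $\widetilde B_n \geq 1$ then $\widetilde B_{n+1} - \widetilde B_n \in \{-1, +1\}$ and the coupling forbids the combination ``$A$ steps down while $\widetilde B$ steps up'', so $A_{n+1} - \widetilde B_{n+1} \geq A_n - \widetilde B_n \geq 0$, with parity preserved because both increments are odd; if $\widetilde B_n = 0$ then $\widetilde B_{n+1} = 1$ while the induction hypothesis forces $A_n$ to be even, hence either $A_n = 0$ (giving $A_{n+1} = 1$) or $A_n \geq 2$ (giving $A_{n+1} \geq 1$), so in both cases $A_{n+1} \geq 1 = \widetilde B_{n+1}$ with matching parity. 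I expect this boundary bookkeeping to be the main obstacle: the reflected walk $\widetilde B$ is forced up off the origin while $A$ is not, so the domination would fail at the configuration $A_n = 1$, $\widetilde B_n = 0$ were it not excluded by the parity congruence; the parity invariant therefore must be carried through the induction alongside the inequality. Finally, $A_n \geq \widetilde B_n$ for all $n$ gives $P_n = \max_{i \leq n} A_i \geq \max_{i \leq n} \widetilde B_i$ pathwise, and since $\max_{i \leq n} \widetilde B_i$ has the same law as $Q_n$, we conclude $\prob(Q_n \leq x) = \prob(\max_{i\leq n}\widetilde B_i \leq x) \geq \prob(P_n \leq x)$ for every $x$, that is, $Q_n \leqst P_n$.
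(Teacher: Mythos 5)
Your proof is correct, and it follows the same high-level strategy as the paper --- realise both processes on a common probability space driven by i.i.d.\ uniforms, establish almost-sure pathwise domination of the running maxima, and then invoke the elementary fact that a.s.\ domination implies the usual stochastic order. The difference lies in how the coupling is built, and here your extra care genuinely pays off. The paper couples the reflected walk to the \emph{same} threshold event for every step, setting $\tilde Z_n = +1$ iff $U_n \le p$ while $\tilde Y_n = +1$ iff $U_n \le f_n$, and then asserts $\prob(\tilde B_n \le \tilde A_n) = 1$. But when $\tilde S_n < 0$ the event that $\tilde A_n = |\tilde S_n|$ steps up is $\{U_{n+1} > f_{n+1}\}$, which does not contain $\{U_{n+1}\le p\}$; concretely, on the positive-probability event $\{U_1 > f_1\}\cap\{U_2\le p\}$ one gets $\tilde A_2 = 0 < 2 = \tilde B_2$ and even $\tilde P_2 = 1 < 2 = \tilde Q_2$, so the paper's one-line justification does not hold for its own coupling. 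Your construction repairs exactly this: you first identify the $\mathcal{F}_n$-measurable ``up'' event $E_{n+1}$ for $A$ (which is $\{W_{n+1}\le f_{n+1}\}$, $\{W_{n+1}>f_{n+1}\}$ or all of $[0,1]$ according to the sign of $S_n$), and you place the length-$p$ trigger set for $\widetilde Z_{n+1}$ \emph{inside} $E_{n+1}$, so that the reflected walk can never step up while $A$ steps down; the measure of the trigger set being exactly $p$ and $W_{n+1}$ being independent of the past keeps $(\widetilde Z_n)$ i.i.d.\ with the right law. The induction with the parity invariant then cleanly handles the only delicate configuration at the reflecting barrier (it rules out $A_n = 1$, $\widetilde B_n = 0$); as a side remark, the congruence $A_n \equiv \widetilde B_n \equiv n \pmod 2$ actually holds automatically since both are nearest-neighbour processes started at parity-matched values, but carrying it through the induction as you do is perfectly sound. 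In short, your argument proves the lemma rigorously where the paper's stated coupling does not quite suffice.
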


\begin{proof}
    Fix an index $n$, and let $U_1, \dots, U_n \sim\mathrm{Uniform}[0, 1]$ be independent uniform random variables. Now let $\tilde{Y}_1 = 2\one\{U_1\leq f_1\} - 1$ and inductively,
        $$\tilde{Y}_{n+1} = 2\one\{U_{n+1} \leq f_{n+1}(\tilde{Y}_1, \dots, \tilde{Y}_n)\} - 1$$
    for $n\geq 1$. Next, let $\tilde{S}_n = \sum_{i=1}^n \tilde{Y}_i$ and $\tilde{A}_n = |\tilde{S}_n|$. Then $\tilde{P}_n = \max\{\tilde{A}_1, \dots, \tilde{A}_n\}$ is a function of $U_1, \dots, U_n$ satisfying $\tilde{P}_n =_{\mathrm{st}} P_n$. Similarly, let $\tilde{Z}_n = 2\one\{U_n \leq p\} - 1$ for all $n$, and define the random variables $\tilde{B}_1, \tilde{B}_2, \dots$ by $\tilde{B}_1 = 1$ and $\tilde{B}_{n+1} = |\tilde{B}_n + \tilde{Z}_{n+1}|$ for $n\geq 1$. Then $\tilde{Q}_n = \max\{\tilde{B}_1, \dots, \tilde{B}_n\}$ is a function of $U_1, \dots, U_n$ satisfying $\tilde{Q}_n =_{\mathrm{st}} Q_n$. Crucially, $\tilde{P}_n$ and $\tilde{Q}_n$ are defined on the same probability space. Now, since $p$ is a lower bound for the set $\mathcal{S}\cup(1 - \mathcal{S})$, we have that $\prob(\tilde{B}_n \leq \tilde{A}_n) = 1$, and consequently, $\prob(\tilde{Q}_n \leq \tilde{P}_n) = 1$. Now apply Theorem 1.A.1 by \citet{shaked2007stochastic}.
\end{proof}

Next, we need a lemma which links the usual stochastic order with the property of boundedness.

\begin{lemma}\label{lemma:technical}
    Let $(A_n)$ and $(B_n)$ be two discrete stochastic processes taking values in $\{0, 1, \dots\}$, not necessarily with independent increments, and let $P_n = \max\{A_1, \dots, A_n\}$ and $Q_n = \max\{B_1, \dots, B_n\}$ for $n\geq 1$. If $Q_n \leqst P_n$ for all $n$, then
        $$\prob((A_n)\ \mathrm{is\ bounded}) \leq \prob((B_n)\ \mathrm{is\ bounded}).$$
\end{lemma}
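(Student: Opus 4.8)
The plan is to express the event that a non-negative integer-valued process is bounded as a double monotone limit of events of the form $\{P_n \leq k\}$, and then simply push the stochastic-order inequality $Q_n \leqst P_n$ through both limits. First I would note that, since $P_n = \max\{A_1,\dots,A_n\}$ is non-decreasing in $n$, the sample path $(A_n)$ is bounded if and only if $P_\infty := \sup_n P_n = \lim_{n\to\infty} P_n$ is finite, and because the $A_n$ take values in $\{0,1,2,\dots\}$ this is in turn equivalent to $P_\infty \leq k$ for some $k \in \nat$. Writing $\{P_\infty < \infty\} = \bigcup_{k\geq 1}\{P_\infty \leq k\}$ as an increasing union of events and $\{P_\infty \leq k\} = \bigcap_{n\geq 1}\{P_n \leq k\}$ as a decreasing intersection, continuity of probability along monotone sequences gives
$$\prob\big((A_n)\ \text{bounded}\big) = \lim_{k\to\infty}\ \lim_{n\to\infty}\ \prob(P_n \leq k),$$
and the identical identity holds for $(B_n)$ with $Q_n$ replacing $P_n$.

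Next I would invoke the hypothesis: $Q_n \leqst P_n$ means by definition that $\prob(Q_n \leq x) \geq \prob(P_n \leq x)$ for every $x \in \reals$, in particular for each integer $x = k$ and each $n$. Since $n \mapsto \prob(P_n \leq k)$ and $n \mapsto \prob(Q_n \leq k)$ are non-increasing (the events $\{P_n \leq k\}$ shrink with $n$), both inner limits exist, and passing to the limit $n\to\infty$ preserves the inequality $\prob(Q_n \leq k) \geq \prob(P_n \leq k)$; passing then to the limit $k\to\infty$ preserves it once more. Combining this with the displayed identity immediately yields $\prob((A_n)\ \text{bounded}) \leq \prob((B_n)\ \text{bounded})$, which is the claim.

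I do not anticipate a genuine obstacle here: the only points requiring care are the order of the two limits and the monotonicity (in $n$ and in $k$) that justifies each passage to the limit, together with the observation that integer-valuedness makes "bounded" equivalent to "$\sup_n P_n$ is a finite integer", so that the countable union over $k \in \nat$ exhausts the event. No independence of increments is needed, consistent with the stated generality of the lemma, and the two processes need not live on a common probability space since only their marginal laws enter.
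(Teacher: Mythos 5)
Your proposal is correct and follows essentially the same route as the paper: both express $\prob((A_n)\ \mathrm{is\ bounded})$ as the double monotone limit $\lim_{k\to\infty}\lim_{n\to\infty}\prob(P_n\leq k)$ via continuity of measure and then push the pointwise inequality $\prob(Q_n\leq k)\geq\prob(P_n\leq k)$ through both limits. The only cosmetic difference is that the paper handles the inner (decreasing) intersection by passing to complements and de~Morgan's law, whereas you invoke continuity from above directly.
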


\begin{proof}
    Define the events $V_m = \bigcap_{n\geq 1}\{P_n \leq m\}$. Then 
        $$V_m = \{P_n \leq m\ \text{for}\ \text{all}\ n\geq 1\} \subseteq \{P_n \leq m+1\ \text{for}\ \text{all}\ n\geq 1\} = V_{m+1},$$
        so $V_1 \subseteq V_2 \subseteq \dots$ is an ascending sequence of events. Also, for a fixed number $m$, define $W_n^{(m)} = \{P_n \leq m\}^C = \{P_n > m\}$, and note that
        $$W_n^{(m)} = \{A_j > m\ \text{for}\ \text{some}\ 1 \leq j\leq n\} \subseteq \{A_j > m\ \text{for}\ \text{some}\ 1\leq j \leq n+1\} = W_{n+1}^{(m)},$$
    so $W_1^{(m)}\subseteq W_2^{(m)}\subseteq \dots$ is ascending (in $n$). Hence, by continuity of measure and de~Morgan's law,
    \begin{multline*}
        \prob\left((A_n)\ \text{is\ bounded}\right) = \prob\left(\bigcup_{m\geq 1}V_m\right) = \lim_{m\to\infty}\prob(V_m) = 1 - \lim_{m\to\infty}\prob\left(\bigcup_{n\geq 1} W_n^{(m)}\right) \\
        = \lim_{m\to\infty}\lim_{n\to\infty}\prob(P_n\leq m) \leq\lim_{m\to\infty}\lim_{n\to\infty}\prob(Q_n\leq m).
    \end{multline*}
    Now reverse every step in the above derivation to obtain the desired inequality.
\end{proof}

Finally, before we can move on to the main result, we need to establish a simple fact about reflected random walks.

\begin{lemma}\label{lemma:random-walk}
    Let $p\in (0, 1)$ with $p<1/2$, and let $(B_n)$ be a reflected random walk as described in Lemma~\ref{lemma:stochastic-order} with $p$ as its parameter. Let $m > 0$ be any positive integer. Then 
        $$\prob(B_n = m\ \mathrm{infinitely\ often}) = \prob\left(\bigcap_{k=1}^\infty\bigcup_{n=k}^\infty\{B_n = m\}\right) = 1.$$
    In particular, $\prob((B_n)\mathrm{\ is\ bounded}) = 0$.
\end{lemma}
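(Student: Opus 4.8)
The plan is to recognise $(B_n)$ as a recurrent Markov chain on $\{0, 1, 2, \dots\}$ and read off both claims from recurrence. First I would note that $(B_n)$ is a time-homogeneous Markov chain with nearest-neighbour transitions: from a state $k \geq 1$ it moves to $k+1$ with probability $p$ and to $k-1$ with probability $1-p$, while from the state $0$ it moves to $1$ with probability $1$, since $|0 + Z| = 1$ whenever $Z \in \{-1, +1\}$. This chain is irreducible because every state communicates with $0$. Consequently it suffices to prove that the chain is recurrent: the standard recurrence dichotomy for irreducible chains on a countable state space then gives $\prob(B_n = m\ \text{infinitely often}) = 1$ from any starting state, for every state $m$, and in particular for every positive integer $m$, which is the first assertion.

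To prove recurrence I would couple $(B_n)$, started from $B_1 = 1$, with the unreflected random walk $R_n = 1 + \sum_{i=2}^n Z_i$ on $\mathbb{Z}$. As long as the chain has not yet reached $0$ we have $B_n \geq 1$, hence $B_n + Z_{n+1} \geq 0$ and the absolute value is inactive, so $B_n = R_n$ up to and including the first time $\sigma = \inf\{n \geq 1 : B_n = 0\}$. Since $p < 1/2$, the strong law of large numbers gives $R_n / n \to 2p - 1 < 0$, so $R_n \to -\infty$ almost surely; as $R$ has unit increments and $R_1 = 1 > 0$, it must take the value $0$ at some finite time, whence $\prob(\sigma < \infty) = 1$. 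From $B_\sigma = 0$ the chain deterministically returns to $1$ and restarts, so the strong Markov property yields infinitely many returns to $0$ almost surely, and the chain is recurrent. (Alternatively one may invoke Foster's drift criterion with the Lyapunov function $V(k) = k$: the one-step drift equals $2p - 1 < 0$ on the complement of the finite set $\{0\}$, which gives positive recurrence directly.)

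For the second assertion, write $\{(B_n)\ \text{is bounded}\} = \bigcup_{M \geq 1}\{B_n \leq M\ \text{for all}\ n\}$. For each fixed $M$ the event $\{B_n \leq M\ \text{for all}\ n\}$ is contained in $\{B_n \neq M+1\ \text{for all}\ n\}$, whose complement has probability $1$ by the first assertion applied to $m = M+1$; hence $\prob(B_n \leq M\ \text{for all}\ n) = 0$, and a countable union of null events is null. I expect the only genuine subtlety to be the recurrence step — specifically, checking that the strong reflecting barrier at $0$, which pushes the chain to $1$ rather than back to $0$, neither disrupts the coupling (valid only before the first visit to $0$) nor the negative-drift bookkeeping; the remaining steps are routine monotone-limit and countable-subadditivity arguments of the kind already used in Lemma~\ref{lemma:technical}.
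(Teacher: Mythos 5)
Your proof is correct and follows essentially the same route as the paper, which simply asserts that $(B_n)$ is an irreducible, (positive) recurrent Markov chain and reads off both claims from that; you supply the details (nearest-neighbour transition structure, the coupling with the unreflected walk plus the law of large numbers to get recurrence, and the countable-subadditivity step for unboundedness) that the paper leaves implicit. Note that your argument only needs recurrence, not positive recurrence, which is all the statement actually requires.
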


\begin{proof}
    As $p < 1/2$, this is immediate from the fact that $B_n$ is an irreducible and positive recurrent Markov chain whose stationary distribution has the non-negative integers as its support.
\end{proof}

We are now ready to prove the main result.

\begin{theorem}\label{thm:master}
    Let $(X_n, Y_n)_{n\geq 1}$ follow the Langlie procedure for some underlying distribution function $F$ and initial parameters $a < b$. For any $q\in (0, 1)$ with $a < \xi_{100q} = F^{-1}(q) < b$, we have that $\prob(X_n\to\xi_{100q}\mathrm{\ as\ }n\to\infty) = 0$.
\end{theorem}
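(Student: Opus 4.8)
The plan is to reduce the statement that $X_n \to \xi_{100q}$ with probability zero to the statement that the cumulative sum process $(S_n)$ is unbounded with probability one. First I would argue that on the event $\{X_n \to \xi_{100q}\}$, the number of indices $n$ at which $\tau_n = 0$ must be finite. Indeed, by Proposition~\ref{prop:bounded-Y}, $\tau_n = 0$ precisely when $S_n$ attains a value not seen before, so the indices with $\tau_n = 0$ are finite if and only if the process $(S_n)$ is bounded. I would then show, conversely, that if only finitely many $\tau_n$ equal $0$, then $X_n$ cannot converge to $\xi_{100q}$: once $\tau_n > 0$ forever, the update is always $X_{n+1} = (X_{\tau_n} + X_n)/2$, and I would trace through the structure of this recursion to show the inputs $X_n$ cannot settle down to a limit other than by being eventually constant — which contradicts, via the positive conditional probability of a $+1$ and of a $-1$ outcome at any bounded input level (guaranteed by $a < \xi_{100q} < b$ and the monotonicity of $F$), the almost-sure occurrence of both outcome types. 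The upshot is the inclusion $\{X_n \to \xi_{100q}\} \subseteq \{(S_n)\ \text{is bounded}\}$, so it suffices to prove $\prob((S_n)\ \text{is bounded}) = 0$.

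To bound $\prob((S_n)\ \text{is bounded})$, I would set $A_n = |S_n|$ and verify that the conditional success probabilities of the Langlie $(Y_n)$ sequence fit the hypotheses of Lemma~\ref{lemma:stochastic-order}. The key point is that since $a < \xi_{100q} < b$ and all inputs lie in $[a,b]$, the conditional probability $\prob(Y_{n+1} = +1 \given Y_1, \dots, Y_n) = F(X_{n+1})$ is bounded away from both $0$ and $1$: writing $p = \min\{F(a^+), 1 - F(b^-)\}$ — more carefully, a uniform lower bound on $F(x)$ and $1 - F(x)$ over the closure of the range of attainable inputs, which is contained in a compact subinterval of $(a', b')$ for suitable $a' \le a$, $b' \ge b$ where $F$ is strictly between $0$ and $1$ — we get that the set $\mathcal{S} \cup (1 - \mathcal{S})$ is uniformly bounded below by some $p$ with $0 < p < 1/2$ (shrinking $p$ if necessary so it is below $1/2$). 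Lemma~\ref{lemma:stochastic-order} then gives $Q_n \leqst P_n$ where $(B_n)$ is the reflected random walk with parameter $p$ and $P_n, Q_n$ are the running maxima of $(A_n), (B_n)$ respectively.

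With the stochastic order in hand, Lemma~\ref{lemma:technical} yields $\prob((A_n)\ \text{is bounded}) \leq \prob((B_n)\ \text{is bounded})$, and Lemma~\ref{lemma:random-walk} states the right-hand side equals $0$. Since $(S_n)$ is bounded if and only if $(A_n) = (|S_n|)$ is bounded, we conclude $\prob((S_n)\ \text{is bounded}) = 0$, and combining with the inclusion from the first paragraph finishes the proof.

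I expect the main obstacle to be the first paragraph: rigorously establishing that finitely many $\tau_n = 0$ forces non-convergence of $(X_n)$ to $\xi_{100q}$. One must rule out the possibility that the averaging recursion $X_{n+1} = (X_{\tau_n} + X_n)/2$ alone drives $X_n$ to a limit. The cleanest route is probably to observe that if $(S_n)$ is bounded then $S_n$ is eventually constant (being integer-valued and bounded would not by itself give "eventually constant" — rather, boundedness means $|S_n| \le M$ always, and one shows that with probability one a bounded integer walk of this type still oscillates, so in fact on the relevant event we may assume $\tau_n$ eventually stabilises only if increments vanish, which has probability zero by the uniform lower bound $p$ on flip probabilities). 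So the genuinely careful step is to show that the event $\{(S_n)\ \text{bounded}\} \cap \{X_n \to \xi_{100q}\}$ is empty or null by exploiting that a bounded walk still changes infinitely often a.s.; this is exactly the content delivered by comparison with $(B_n)$, so in fact the logical structure is to prove directly $\{X_n\ \text{converges}\} \subseteq \{(S_n)\ \text{bounded}\}$ and separately $\prob((S_n)\ \text{bounded}) = 0$, which is what the lemmas are designed for. Care with the precise definition of the uniform lower bound $p$ — accounting for inputs that may wander slightly and for the edge cases $F(a), F(b) \in \{0, 1\}$ — is the other place where the argument needs to be handled with some delicacy.
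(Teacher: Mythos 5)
The second half of your argument --- reducing to $\prob((S_n)\ \text{is bounded}) = 0$ and establishing this by comparing $(|S_n|)$ with the reflected random walk through Lemmas~\ref{lemma:stochastic-order}, \ref{lemma:technical} and \ref{lemma:random-walk}, with $p$ a positive lower bound for $\min\{F(a), 1-F(b)\}$ --- is exactly the paper's route and is sound. The genuine gap is in the first step. You correctly identify that what is needed is the inclusion $\{X_n \to \xi_{100q}\} \subseteq \{\tau_n = 0\ \text{finitely often}\} = \{(S_n)\ \text{bounded}\}$, but you never actually prove it: the only supporting argument you sketch is for the implication ``finitely many $\tau_n = 0$ implies $X_n$ does not converge,'' which is the wrong direction (it would yield an inclusion of the convergence event into $\{(S_n)\ \text{unbounded}\}$, an event of probability one, from which nothing follows) and is also false as a pathwise statement --- once $\tau_n > 0$ forever, the recursion $X_{n+1} = (X_{\tau_n} + X_n)/2$ can perfectly well converge without being eventually constant, and the almost-sure occurrence of both outcome types does not obstruct convergence of the inputs.

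The missing idea is elementary but essential, and it is the only place where the hypothesis $a < \xi_{100q} < b$ genuinely enters: whenever $\tau_n = 0$, the update rule~\eqref{eq:next-langlie} forces $X_{n+1} = (a + X_n)/2$ or $X_{n+1} = (X_n + b)/2$. Writing $\xi = \xi_{100q}$ and taking $\epsilon < \min\{(\xi - a)/2, (b - \xi)/2\}$, if $|X_n - \xi| < \epsilon$ then $|X_{n+1} - \xi| \geq \tfrac12|b - \xi| - \tfrac12|X_n - \xi| > \tfrac14|b-\xi| > \epsilon$ in the second case, and similarly in the first. So every late occurrence of $\tau_n = 0$ produces a jump bounded away from $\xi$, and convergence to $\xi$ therefore forces $\tau_n = 0$ only finitely often --- which is precisely the inclusion you need. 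With that supplied, your proof coincides with the paper's. (Your side remark about the edge cases $F(a) = 0$ or $F(b) = 1$ is a fair observation --- the paper's choice $0 < p < \min\{F(a), 1 - F(a), F(b), 1 - F(b)\}$ tacitly assumes these are excluded --- but it is peripheral to the main gap.)
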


\begin{proof}
    Fix $q$ and $\xi = F^{-1}(q)$. The first step of the proof is to establish that
        \begin{align}
        \{X_n\to\xi\mathrm{\ as\ }n\to\infty\} & \subseteq \{\tau_n = 0\mathrm{\ finitely\ many\ times}\}\notag \\
        & = \{\tau_n = 0\mathrm{\ infinitely\ often}\}^C\label{eq:claim}.
        \end{align}
    To prove this, let $(x_n, y_n)_{n\geq 1}$ be a realisation of the Langlie procedure and suppose that $x_n\to\xi$ as $n\to\infty$. Then in particular, for $\epsilon < \min\{(\xi - a)/2, (b - \xi)/2\}$, we know that $|x_n - \xi| < \epsilon$ for sufficiently large $n$. But for any such large $n$ with $\tau_n = 0$, we see that if $y_n = -1$, then
        $$|x_{n+1} - \xi| = |(x_n + b)/2 - \xi| \geq \frac12\left||b - \xi| - |x_n - \xi|\right| = \frac12|b - \xi| - \frac12|x_n - \xi| > \frac14|b - \xi| > \epsilon,$$
    violating convergence. Similarly, $|x_{n+1} - \xi| > \epsilon$ if $\tau_n = 0$ and $y_n = +1$. This means that $\tau_n = 0$ only finitely many times, proving \eqref{eq:claim}.

    As before, let $S_n = \sum_{i=1}^n Y_i$ and $A_n = |S_n|$. Then $(S_n)$ is bounded if and only if $(A_n)$ is bounded. Let $0 < p < \min\{F(a), 1 - F(a), F(b), 1 - F(b)\} < 1/2$ and let $(B_n)$ be a reflected random walk as described in Lemma~\ref{lemma:stochastic-order} with $p$ as its parameter. By Lemma~\ref{lemma:random-walk}, $\prob((B_n)\ \mathrm{is}\ \mathrm{bounded}) = 0$. Further, by Lemma~\ref{lemma:stochastic-order} and Lemma~\ref{lemma:technical}, $\prob((A_n)\ \mathrm{is}\ \mathrm{bounded}) \leq \prob((B_n)\ \mathrm{is}\ \mathrm{bounded})$. Putting everything together, we get
    \begin{align*}
        \prob(X_n\to\xi\ \mathrm{as}\ n\to\infty) & \leq \prob(\tau_n = 0\ \mathrm{finitely\ many\ times}) && \mathrm{by}\ \eqref{eq:claim} \\
        & = \prob((S_n)\ \mathrm{is\ bounded}) && \mathrm{by}\ \mathrm{Proposition}~\eqref{prop:bounded-Y} \\
        & = \prob((A_n)\ \mathrm{is\ bounded}) && \\
        & \leq \prob((B_n)\ \mathrm{is\ bounded}) && \mathrm{by}\ \mathrm{Lemmas}~\ref{lemma:stochastic-order}\ \mathrm{and}~\ref{lemma:technical} \\
        & = 0 && \mathrm{by}\ \mathrm{Lemma}~\ref{lemma:random-walk},
    \end{align*}
    as required.
\end{proof}

For the purpose of sensitivity testing, Theorem~\ref{thm:master} tells us that $X_{N+1}$ is not a consistent estimator for $\xi_{50}$, given data $\{(X_n, Y_n)\}_{n=1}^N$. From the proof of the theorem, we see that there is no theoretical guarantee that $X_{n+1}$ is close to $\xi_{50}$, even for arbitrarily large values of $n$. Indeed, the probability that $\tau_n = 0$ is always non-negligible. Thus, the parametric modeling must be viewed as an intrinsic component of the Langlie procedure. Hence, unlike model-independent alternatives like the Robbins--Monro procedure, the Langlie procedure should only be applied when user is confident that the parametric model is correctly specified.

\bibliographystyle{plainnat}
\bibliography{bibliography}

\begin{thebibliography}{20}
\providecommand{\natexlab}[1]{#1}
\providecommand{\url}[1]{\texttt{#1}}
\expandafter\ifx\csname urlstyle\endcsname\relax
  \providecommand{\doi}[1]{doi: #1}\else
  \providecommand{\doi}{doi: \begingroup \urlstyle{rm}\Url}\fi

\bibitem[Baker(2021)]{baker2021overview}
E.~L. Baker.
\newblock An overview of single event testing - methods and analysis.
\newblock Technical Report O-216, Munitions Safety Information Analysis Center (MSIAC), 2021.

\bibitem[Blum(1958)]{blum1958note}
J.~R. Blum.
\newblock A note on stochastic approximation.
\newblock \emph{Proceedings of the American Mathematical Society}, 9:\penalty0 404--407, 1958.

\bibitem[Burke and Truett(2017 ({R}evised 2018))]{burke2017binary}
S.~Burke and L.~Truett.
\newblock Binary response and single stress factor test methods.
\newblock Technical Report 08-2017, Air Force Institute of Technology: STAT Center of Excellence, 2017 ({R}evised 2018).

\bibitem[Christensen et~al.(2024{\natexlab{a}})Christensen, Novik, and Unneberg]{christensen2024estimating}
D.~Christensen, G.~P. Novik, and E.~Unneberg.
\newblock Estimating sensitivity with the {B}ruceton method: {S}etting the record straight.
\newblock \emph{Propellants, Explosives, Pyrotechnics}, In press, 2024{\natexlab{a}}.
\newblock \doi{10.1002/prep.202400022}.

\bibitem[Christensen et~al.(2024{\natexlab{b}})Christensen, Stoltenberg, and Hjort]{christensen2024theory}
D.~Christensen, E.~A. Stoltenberg, and N.~L. Hjort.
\newblock Theory for adaptive designs in regression.
\newblock \emph{Submitted}, 2024{\natexlab{b}}.

\bibitem[Cochran and Davis(1965)]{cochran1965robbins}
W.~G. Cochran and M.~Davis.
\newblock The {R}obbins--{M}onro method for estimating the median lethal dose.
\newblock \emph{Journal of the Royal Statistical Society. Series B (Methodological)}, 27:\penalty0 28--44, 1965.
\newblock \doi{10.1111/j.2517-6161.1965.tb00583.x}.

\bibitem[Cumming(2024)]{cumming2024munitions}
A.~Cumming.
\newblock Munitions underwater - a problem for today.
\newblock \emph{Propellants, Explosives, Pyrotechnics}, 49, 2024.
\newblock \doi{10.1002/prep.202400052}.

\bibitem[Dixon and Mood(1948)]{dixon1948method}
W.~J. Dixon and A.~M. Mood.
\newblock A method for obtaining and analyzing sensitivity data.
\newblock \emph{Journal of the American Statistical Association}, 43:\penalty0 109--126, 1948.
\newblock \doi{10.1080/01621459.1948.10483254}.

\bibitem[Jensen et~al.(2020)Jensen, Moxnes, Unneberg, and Christensen]{jensen2020models}
T.~L. Jensen, J.~F. Moxnes, E.~Unneberg, and D.~Christensen.
\newblock Models for predicting impact sensitivity of energetic materials based on the trigger linkage hypothesis and {A}rrhenius kinetics.
\newblock \emph{Journal of Molecular Modeling}, 26, 2020.
\newblock \doi{10.1007/s00894-019-4269-z}.

\bibitem[Langlie(1962)]{langlie1962reliability}
H.~J. Langlie.
\newblock A reliability test for ``one-shot'' items.
\newblock Technical Report U-1792, Aeronutronic Division, Ford Motor Company, Newport Beach, CA, 1962.

\bibitem[Lansford et~al.(2022)Lansford, Barnes, Rice, and Jensen]{lansford2022building}
J.~L. Lansford, B.~C. Barnes, B.~M. Rice, and K.~F. Jensen.
\newblock Building chemical property models for energetic materials from small datasets using a transfer learning approach.
\newblock \emph{Journal of Chemical Information and Modeling}, 62:\penalty0 5397--5410, 2022.
\newblock \doi{10.1021/acs.jcim.2c00841}.

\bibitem[Michalchuk et~al.(2021)Michalchuk, Hemingway, and Morisson]{michalchuk2021predicting}
A.~A.~L. Michalchuk, J.~Hemingway, and C.~A. Morisson.
\newblock Predicting the impact sensitivities of energetic materials through zone-center phonon up-pumping.
\newblock \emph{The Journal of Chemical Physics}, 154, 2021.
\newblock \doi{10.1063/5.0036927}.

\bibitem[{North Atlantic Treaty Organization (NATO)}(1999)]{stanag4489}
{North Atlantic Treaty Organization (NATO)}.
\newblock {STANAG} 4489 - {E}xplosives, impact sensitivity tests, 1999.

\bibitem[{North Atlantic Treaty Organization (NATO)}(2009{\natexlab{a}})]{stanag4487}
{North Atlantic Treaty Organization (NATO)}.
\newblock {STANAG} 4487 - {E}xplosives, friction sensitivity tests, 2009{\natexlab{a}}.

\bibitem[{North Atlantic Treaty Organization (NATO)}(2009{\natexlab{b}})]{stanag4488}
{North Atlantic Treaty Organization (NATO)}.
\newblock {STANAG} 4488 - {E}xplosives, shock sensitivity tests, 2009{\natexlab{b}}.

\bibitem[Novik(2022)]{novik2022analysis}
G.~P. Novik.
\newblock Analysis of samples of high explosives extracted from explosive remnants of war.
\newblock \emph{Science of the Total Environment}, 842, 2022.
\newblock \doi{10.1016/j.scitotenv.2022.156864}.

\bibitem[Novik and Christensen(2024)]{novik2024increased}
G.~P. Novik and D.~Christensen.
\newblock Increased impact sensitivity in ageing high explosives; analysis of {A}matol extracted from explosive remnants of war.
\newblock \emph{Royal Society Open Science}, 11, 2024.
\newblock \doi{10.1098/rsos.231344}.

\bibitem[Robbins and Monro(1951)]{robbins1951stochastic}
H.~Robbins and S.~Monro.
\newblock A stochastic approximation method.
\newblock \emph{The Annals of Mathematical Statistics}, 22:\penalty0 400--407, 1951.
\newblock \doi{10.1214/aoms/1177729586}.

\bibitem[Shaked and Shanthikumar(2007)]{shaked2007stochastic}
M.~Shaked and J.~G. Shanthikumar.
\newblock \emph{Stochastic Orders}.
\newblock Springer, 2007.

\bibitem[{U.S.~Department of Defense}(2017)]{mil-std-331D}
{U.S.~Department of Defense}.
\newblock {MIL}-{STD}-331{D} ({A}ppendix {G}) - {S}tatistical methods to determine the initiation probability of one-shot devices, 2017.

\end{thebibliography}

\end{document}